\theoremstyle{definition}
\newtheorem{lemma}{Lemma}
\newtheorem{theorem}[lemma]{Theorem}
\newtheorem{remark}[lemma]{Remark}
\newtheorem{corollary}[lemma]{Corollary}
\newcommand{\periodafter}[1]{#1.}
\titleformat{\subsection}[runin]
{\normalfont\bfseries}{\thesubsection}{0.5em}{\periodafter}
\numberwithin{equation}{section}
\renewcommand{\thesubsection}{\arabic{section}.\arabic{subsection}.}
\newcounter{RomanNumber}
\newcommand{\MyRoman}[1]{\setcounter{RomanNumber}{#1}\Roman{RomanNumber}}
\title{On the maximal abelian  subalgebras of the general linear Lie color algebras}
\author{\textsc{Shujuan Wang$^{1}$ and
  \textsc{Wende Liu$^{2,}$}}\footnote{Correspondence:  wendeliu@ustc.edu.cn (W. Liu)}\\
  {\small \textit{$^1$Department of Mathematics, Shanghai Maritime University,}}\\
\small \textit{Shanghai 201306, China}  \\
 \small\textit{$^2$School of Mathematics and Statistics,}
\textit{Hainan Normal University,}\\ \small\textit{ Haikou 571158,  China} }
\date{ }
\begin{document}
\maketitle
\begin{quotation}
\small\noindent \textbf{Abstract}:
Let $\Gamma$ be a finite group and $V$
 a finite-dimensional $\Gamma$-graded space over an  algebraically closed field of characteristic not equal to 2.
In the sense of conjugation, we classify all the so-called pre-nil  or nil  maximal abelian subalgebras
for the general linear Lie color algebra $\frak{gl}(V,\Gamma)$.
In the situation of $\Gamma$ being a  cyclic group, we determine the minimal dimensions of pre-nil  or nil faithful representations
for any finite-dimensional abelian Lie color algebra.

\vspace{0.2cm} \noindent{\textbf{Keywords}}:  faithful representations; abelian subalgebras; Lie color algebras

\vspace{0.2cm} \noindent{\textbf{Mathematics Subject Classification 2010}}: 17B10, 17B30, 17B50, 17B75
\end{quotation}

\setcounter{section}{0}
\section{Introduction}
Lie color algebras, which were introduced by Ree in 1960 (see \cite{Ree}),
play an important role in mathematics and theoretical physics,
especially in the conformal field theory and supersymmetries (see \cite{Ano,Wei}).
For example, Scheunert and Zhang studied cohomology of Lie color algebras (see \cite{schunertzhang});
Su, Zhao and Zhu constructed simple Lie color algebras of generalized Witt type and Weyl type (see \cite{su1} and \cite{su2}).

Let $L$ be a finite-dimensional Lie (color) algebra and write
$$
\mu(L)=\min \{\dim V\mid \mbox{$(\rho, V)$ is a faithful representation of $L$}\}.
$$
Ado's theorem of Lie (color) version guarantees the existence of $\mu(L)$ (see \cite[p.719]{Scheunert}).
In 1905, I. Schur  determined the maximal dimension of abelian subalgebras
for the general linear Lie algebra over $\mathbb{C}$ in \cite{Schur} and then $\mu(L)$ can be determined for any finite-dimensional abelian Lie algebra $L$ (see also \cite{Jacobson,Mirzakhani}).
A  super-version of  Schur's work  was given for Lie superalgebras over $\mathbb{C}$ (see \cite{liu-wangabel,liu-wangsubalgebra}).
In this paper, we shall  offer a color-version of Schur's work  for Lie color algebras over an algebraically closed field of characteristic not equal to 2.
We also determine the minimal pre-nil  or nil faithful representations for any finite-dimensional abelian Lie color algebra with respect to a finite cyclic group.

As is well-known, the function $\mu$  plays an important role on affine crystallographic groups
and finitely generated torsion-free nilpotent groups.
Benoist,
Burde and Grunewald (see \cite{Benoist,Burde2})  gave an example of a nilpotent Lie
algebra $L$  such that $\mu(L)>\dim L+1$, which is a counterexample of
Milnor's conjecture (see \cite{Milnor}).
However, it is  not easy to compute $\mu(L)$, even the bounds of $\mu(L)$ for a given finite-dimensional Lie (color) algebra.
A lot of  work on the function $\mu$ revolves around nilpotent or (semi)simple Lie (super)algebras as well as their various  extensions (see \cite{Schur,Jacobson,Mirzakhani,Burde3,DB,Burde4,CR,CR1,Birkhoff,
Reed,Benoist,liu-wang,liu-wangabel,wang-chen-liu,liu-chen,liu-wangsubalgebra,liu-wangcur} for example).

Throughout $\mathbb{F}$ is an algebraically closed field of characteristic not equal to 2
and all vector spaces and algebras are over $\mathbb{F}$  and of finite
dimensions.

\subsection{Generalized matrix units}
 Denote by $\mathrm{M}(m)$  the set consisting of  all $m\times m$ matrices over $\mathbb{F}$.
We define a total order on the set $\{(i, j)\mid 1\leq i,j\leq m\}$ by
 $$(i,j)<(k, l)\Longleftrightarrow i<k, \quad\mbox{or}\quad i=k \quad\mbox{but}\quad j< l.$$
Denote by $\mathrm{ht}(X)=\min\{(i, j)\mid x_{ij}\neq 0\}$
for $X=(x_{ij})\in \mathrm{M}(m)$ and $\mathrm{ht}(0)=(\infty, \infty)$.
Hereafter $a_{mn}$ is  $(m,n)$-entry of the matrix $A=(a_{ij})$.
An  element $X$  is called a generalized matrix unit of $(i, j)$-form, usually written as $u_{i,j}$,
if $\mathrm{ht}(X)=(i, j)$
and its $i$th row is $(0,...,0,1,0...,0)$,
where $1$ is at the $j$-th position.
 In general, for given $(i, j)$, there are many generalized
matrix units $u_{i,j}$.
For a subspace $S$ of $\mathrm{M}(m)$, write
$$\mathrm{ht}(S):= \min\{\mathrm{ht}(X)\mid X\in S\}.$$
Denote by $\mathrm{I}_{m}=\Sigma^{m}_{i=1}e_{ii}$ the  $m\times m$ identity matrix.
Hereafter $e_{ij}$ is the matrix unit having 1 at $(i, j)$-position and 0 elsewhere.
Fix a nonzero element $a\in \mathbb{F}$ and let $\mathrm{T}_{ij}(a)$ or $\mathrm{D}_{i}(a)$
 be  $\mathrm{I}_{m}+ae_{ij}$ or  $\mathrm{I}_{m}+(a-1)e_{ii}$, respectively.
As in \cite{liu-wangsubalgebra},
we introduce the following similar operations of $\mathrm{M}(m)$:
\begin{enumerate}
\item[(1)]
\textit{$t$-type operators}
$$\mbox{$s_{\mathrm{T}_{ij}(a)}:=\mathrm{l}_{\mathrm{T}_{ij}(a)^{-1}}\mathrm{r}_{\mathrm{T}_{ij}(a)}$ for $1\leq i<j\leq m$ and $a\in \mathbb{F}$}.$$
\item[(2)]
\textit{$d$-type operators}
$$\mbox{$s_{\mathrm{D}_{i}(a)}:=\mathrm{l}_{\mathrm{D}_{i}(a)^{-1}}\mathrm{r}_{\mathrm{D}_{i}(a)}$ for $1\leq i\leq m$ and $a\in \mathbb{F}\backslash\{0\}$},$$
\end{enumerate}
where, $\mathrm{l}_{A}$
or $\mathrm{r}_{A}$ is
the operator of left or right associative multiplication by the matrix $A$ for $A\in \mathrm{M}(m)$, respectively.
If $A=(a_{ij})$ and $\mathrm{ht}(A)=(i, j)$, we write
$$\mathrm{h}_{A}=\sum_{l> j} s_{\mathrm{T}_{jl}(-a_{il})}s_{\mathrm{D}_{i}(a_{ij})}.$$

\subsection{Lie color algebras}
Let $\Gamma$ be an  abelian group and $V=\oplus_{\alpha\in \Gamma} V_{\alpha}$ a  $\Gamma$-graded space.
The elements in $\cup _{\alpha\in\Gamma}V_{\alpha}$ are said to be
 homogenous. For a  homogeneous  element $v\in V_{\alpha}, \alpha\in \Gamma$,
we set $|v|=\alpha,$ the degree of $v.$
In addition, the symbol $|x|$ implies
that $x$ is homogeneous.
By definition, a $\Gamma$-graded algebra $\frak{g}$ is
a $\Gamma$-graded space  $\frak{g}=\oplus_{\alpha\in \Gamma}\frak{g}_{\alpha}$
with a bilinear  multiplication consistent with the $\Gamma$-gradation.
 Let $\frak{g}$ be a $\Gamma$-graded algebra.
If the multiplication of $\frak{g}$ is trivial, we say $\frak{g}$ to be abelian;
if the multiplication of $\frak{g}$ satisfies the associativity law, we say $\frak{g}$ to be associative.

Let $\Gamma$ be an  abelian group.
A bi-character on $\Gamma$ is a map
$\varepsilon: \Gamma\times \Gamma\longrightarrow \mathbb{F}\backslash\{0\}$ such that
$$\varepsilon(\alpha, \beta)\varepsilon(\beta, \alpha)=1, \quad\varepsilon(\alpha\beta, \gamma)=\varepsilon(\alpha, \gamma)\varepsilon(\beta, \gamma),$$
where $\alpha, \beta, \gamma\in \Gamma$.
Let $\Gamma$ be an  abelian group of a bi-character $\varepsilon$
and $\frak{g}$ a $\Gamma$-graded algebra, whose multiplication is denoted by $[\,,\,]$.
If
$$[x, y]+\varepsilon(|x|, |y|)[y, x]=0,$$
$$\varepsilon(|z|, |x|) [x, [y, z]] + \varepsilon(|x|, |y|)[y, [z, x]] + \varepsilon(|y|, |z|)[z, [x, y]] = 0,$$
where $x, y, z\in \frak{g}$, then $\frak{g}$ is called a Lie color algebra.
 Lie (super)algebras are  Lie color algebras (see \cite[p.525]{su2}).
If $\frak{A}=\oplus_{\alpha\in \Gamma}\frak{A}_{\alpha}$ is a $\Gamma$-graded associative algebra,
by introducing a new multiplication
$$[x, y] = xy -\varepsilon(|x|, |y|)yx,\quad x, y\in \frak{A},$$
$\frak{A}$ becomes a Lie color algebra, which is denoted by $\frak{A}^-$.

\subsection{The general linear Lie color algebras}\label{2004121756}
Let $\Gamma$ be an abelian group
 of a bi-character $\varepsilon$ and  $V=\oplus_{\alpha\in \Gamma} V_{\alpha}$ a  $\Gamma$-graded space.
Then $\mathrm{End}_{\mathbb{F}}(V)$ is a  $\Gamma$-graded associative algebra
 and   $\mathrm{End}_{\mathbb{F}}(V)^-$ is a Lie color algebra in the usual fashion,
 which is  called  the general linear Lie color algebra and denoted by   $\frak{gl}(V, \Gamma)$  (see \cite[p.447]{zhang2013}).
Let $\frak{a}$ be a subalgebra of $\frak{gl}(V,\Gamma)$.
If each element in $\frak{a}$ is nilpotent,
we say $\frak{a}$ to be nil;
if each element in $\cup_{\alpha\in \Gamma^*}\frak{a}_{\alpha}$ is nilpotent,
we say $\frak{a}$ to be pre-nil.
Hereafter $\Gamma^*$ denotes the set $\Gamma\backslash\{0\}$, where $0$ is the identity of $\Gamma$.
If $\frak{a}$ is a (pre-nil  or nil) abelian subalgebra of the maximal dimension for $\frak{gl}(V,\Gamma)$,
we say $\frak{a}$ to be (pre-nil  or nil) maximal abelian.
Let $L=\oplus_{\alpha\in \Gamma} L_{\alpha}$ be a Lie color algebra
and $\rho: L\longrightarrow\frak{gl}(V,\Gamma)$ a linear map of degree $0$.
If $\rho([x, y])=[\rho(x), \rho(y)]$ for $x, y\in L$,
we call $(\rho, V)$ to be a representation of $L$.
Let $(\rho, V)$ be a representation of $L$. If $\ker\rho=0$, we say $(\rho, V)$ to be faithful;
if each element in $\rho(V)$ is nilpotent, we say $(\rho, V)$ to be nil;
if each element in $\cup_{\alpha\in \Gamma^*}\rho(V)_{\alpha}$ is nilpotent,
we say $(\rho, V)$ to be pre-nil.
Write
\begin{align*}
\mu_{\text{nil}}(L)&=\min \{\dim V\mid \mbox{$(\rho, V)$ is a nil faithful representation of $L$}\},\\
\mu_{\text{pre-nil}}(L)&=\min \{\dim V\mid \mbox{$(\rho, V)$ is a pre-nil  faithful representation of $L$}\}.
\end{align*}

\subsection{The matrix-version of $\frak{gl}(V,\Gamma)$}
Hereafter, we make a convention that the symbol $\Gamma$ always denotes
a finite abelian group of a bi-character $\varepsilon$,
whose all elements are $\alpha_1, \ldots, \alpha_k$.
For the fixed order of $\alpha_1, \ldots, \alpha_k$,
if $V=\oplus^k_{i=1} V_{\alpha_i}$ is a  $\Gamma$-graded space
and $\dim V_{\alpha_i}=m_i$, we say that $(m_1,\ldots, m_k)$ is $\Gamma$-dimension of $V$.
Let $V$ be of $\Gamma$-dimension $(m_1, \ldots, m_k)$ and $m=\sum^k_{i=1}m_i$.
A rearrangement  of
$$(\stackrel{m_{1}}{\overbrace{\alpha_{1}, \ldots, \alpha_{1}}},
\stackrel{m_{2}}{\overbrace{\alpha_{2}, \ldots, \alpha_{2}}}, \ldots,
\stackrel{m_{k}}{\overbrace{\alpha_{k}, \ldots, \alpha_{k}}})$$
is called an $(m_1, \ldots, m_k)$-tuple.
Denote by $B=(v_{1},\ldots,v_{m})$  an ordered homogeneous basis of $V$
and equip it with an $(m_1, \ldots, m_k)$-tuple
$\Phi =(|v_{1}|,\ldots,|v_{m}|)$ induced by $B$.
Then the general linear Lie color algebra $\frak{gl}(V, \Gamma)$ is isomorphic to its  matrix-version
$\frak{gl}^{\Phi }(m_1,\ldots,m_k)$ induced by tuple $\Phi $, which
has the underlying matrices space $\mathrm{M}(m)$ and  a $\Gamma$-grading structure
$$\frak{gl}^{\Phi }(m_1,\ldots,m_k)=\oplus^k_{l=1}\frak{gl}^{\Phi }_{\alpha_{l}}(m_1,\ldots,m_k),$$
 where
$$\mbox{$\frak{gl}^{\Phi }_{\alpha_{l}}(m_1,\ldots,m_k)=\mathrm{Span}\{e_{ij}\mid
\Phi _i-\Phi _j=\alpha_{l}\}$}.$$
Hereafter, $\Phi_n$ denotes the $n$-th entry of the tuple $\Phi$.
In particular, if
$$\Phi=(\stackrel{m_{1}}{\overbrace{\alpha_{1}, \ldots, \alpha_{1}}},
\stackrel{m_{2}}{\overbrace{\alpha_{2}, \ldots, \alpha_{2}}}, \ldots,
\stackrel{m_{k}}{\overbrace{\alpha_{k}, \ldots, \alpha_{k}}}),$$
$\frak{gl}^{\Phi }(m_1,\ldots,m_k)$ is denoted by $\frak{gl}(m_1,\ldots,m_k)$ for short.
For any $(m_1, \ldots, m_k)$-tuple $\Phi $, write $\frak{t}^{\Phi }(m_1,\ldots,m_k)$ or $\frak{s}^{\Phi }(m_1,\ldots,m_k)$
 for the subalgebras of $\frak{gl}^{\Phi }(m_1,\ldots,m_k)$
consisting of upper triangular or strictly upper triangular matrices, respectively.

\section{Maximal pre-nil or nil abelian subalgebras of $\frak{gl}(V,\Gamma)$}
In this section, we shall give a lower bound of the maximal dimensions
for abelian subalgebras of the general linear Lie color algebra $\frak{gl}(V,\Gamma)$.
In the case $\Gamma=\mathbb{Z}_k$, the cyclic group of order $k$, we shall classify the pre-nil  or nil maximal
abelian subalgebras of  $\frak{gl}(V,\Gamma)$ in the sense of conjugation.
In addition, we shall give a method to determine $\mu_{\text{pre-nil}}(L)$ and $\mu_{\text{nil}}(L)$
for any abelian Lie color algebra $L$.
\subsection{Main lemmas}
The following lemma realizes the triangulation of matrices
for any pre-nil abelian subalgebra of $\frak{gl}(m_1,\ldots, m_k)$,
which is a color-version of \cite[Lemma  3.3]{Y2}.

\begin{lemma}\label{1219}
Let $\frak{a}$ be  a pre-nil  abelian subalgebra of  $\frak{gl}(m_1,\ldots, m_k)$.
Then there exists  an $(m_1, \ldots, m_k)$-tuple $\Phi$
such that $\frak{a}$ is contained in $\frak{t}^{\Phi }(m_1, \ldots, m_k)$.
Furthermore, if $\frak{a}$ is nil abelian, then there exists  an $(m_1, \ldots, m_k)$-tuple $\Phi $
such that $\frak{a}$ is contained in $\frak{s}^{\Phi }(m_1, \ldots, m_k)$.
\end{lemma}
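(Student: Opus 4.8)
The plan is to build, by induction on $m=\dim V$, a complete flag $0=W_0\subset W_1\subset\cdots\subset W_m=V$ of homogeneous subspaces, each invariant under $\frak a$. Choosing a homogeneous basis adapted to such a flag exhibits every element of $\frak a$ as an upper triangular matrix, and the degrees of the successive one-dimensional quotients $W_i/W_{i-1}$, read off in order, produce the required $(m_1,\ldots,m_k)$-tuple $\Phi$ with $\frak a\subseteq\frak t^\Phi(m_1,\ldots,m_k)$. The whole problem therefore reduces to producing a single nonzero homogeneous vector $v$ spanning an $\frak a$-invariant line: one then passes to the graded quotient $V/\mathbb F v$, on which $\frak a$ still acts as a pre-nil abelian color subalgebra, and invokes the inductive hypothesis.

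To produce such a $v$ I would first exploit that the degree-zero part $\frak a_0$ commutes in the ordinary sense: since $\varepsilon(0,\gamma)=1$ for every $\gamma$, the color bracket restricted to $\frak a_0$ is the usual commutator, so $\frak a_0$ is a genuinely commuting family, and moreover every degree-zero operator commutes with every homogeneous element of nonzero degree. Because $\frak a_0$ preserves the grading, its simultaneous generalized eigenspace decomposition $V=\bigoplus_\chi V^\chi$ is by homogeneous subspaces, and since the nonzero-degree components of $\frak a$ commute with $\frak a_0$, each $V^\chi$ is invariant under all of $\frak a$. Restricting to one nonzero $V^\chi$ and subtracting the scalar character $\chi$ from $\frak a_0$ turns $\frak a_0$ into nilpotent operators, while each $\frak a_\alpha$ with $\alpha\neq0$ is nilpotent by the pre-nil hypothesis. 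Thus on $V^\chi$ I am left with a family of homogeneous, pairwise $\varepsilon$-commuting nilpotent operators.

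The heart of the argument is then that such a family has a nonzero homogeneous common kernel vector, and the key point is that the color twist is harmless: if $xy=\varepsilon(|x|,|y|)yx$ and $xv=0$, then $x(yv)=\varepsilon(|x|,|y|)\,y(xv)=0$, so $y$ still preserves $\ker x$. Hence, ordering a homogeneous spanning set $x_1,\ldots,x_r$ of the family, the chain $\ker x_1\supseteq\ker x_1\cap\ker x_2\supseteq\cdots$ consists of homogeneous subspaces, and at each step the restriction of the next nilpotent operator to the previous nonzero invariant subspace has nonzero kernel; the final intersection $\bigcap_i\ker x_i$ is therefore nonzero and graded, and any nonzero homogeneous $v$ in it is a common eigenvector of $\frak a_0$ annihilated by every nonzero-degree element — exactly the invariant homogeneous line needed. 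For the nil case one observes that when $\frak a$ is nil the character $\chi$ must vanish, so $v$ is annihilated by all of $\frak a$; the diagonal entries along the flag are then zero and the construction lands $\frak a$ in $\frak s^\Phi(m_1,\ldots,m_k)$.

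The main obstacle I anticipate lies in the bookkeeping forced by the grading rather than in the linear-algebra core: one must keep every subspace in sight homogeneous so that the resulting flag yields an honest tuple $\Phi$ (a rearrangement of the standard degrees), and one must check that passing to the quotient preserves both color-commutativity and the pre-nil property, together with the identification $\frak{gl}^\Phi\cong\frak{gl}(m_1,\ldots,m_k)$ up to the permutation conjugation implicit in ``the sense of conjugation.'' A secondary point to verify is the role of $\mathrm{char}\,\mathbb F\neq2$: for a degree $\alpha$ with $\varepsilon(\alpha,\alpha)=-1$ the color-abelian relation already forces $x^2=0$ on $\frak a_\alpha$, consistent with the nilpotency used above. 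Alternatively, the same reduction can be carried out computationally with the tools of the paper: using the height function $\mathrm{ht}$ and the conjugation operator $\mathrm{h}_A$ to normalize a minimal-height element to a generalized matrix unit and clear its row, one triangularizes step by step. I would present the conceptual eigenvector argument as the primary proof and use the explicit $\mathrm{h}_A$-manipulations only to confirm that the conjugations involved are homogeneous.
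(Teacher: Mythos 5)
Your proof is correct, and at its core it uses the same idea as the paper — reduce everything to producing a common homogeneous eigenvector for $\frak a$, exploiting that $\varepsilon(0,\cdot)=1$ makes $\frak a_0$ an honestly commuting family that honestly commutes with each $\frak a_\alpha$, while the pre-nil hypothesis makes the nonzero-degree parts nilpotent — but your execution differs in two ways worth noting. First, the order of the reductions is reversed: the paper first forms the common kernel $V_1$ of the nonzero-degree components of $\frak a$ and then takes a common eigenvector of $\frak a_0$ inside $V_1$, whereas you first decompose $V$ into joint generalized eigenspaces of $\frak a_0$ (homogeneous since $\frak a_0$ has degree zero), restrict to one $V^\chi$, and then find a common kernel vector of the shifted, $\varepsilon$-commuting nilpotent family. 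Second, and more substantively, your argument is self-contained where the paper is not: the paper asserts that $V_1\neq 0$ with only the remark that nonzero-degree elements have sole eigenvalue zero, and it delegates both this step and the triangulation itself to Jacobson's theorem on weakly closed sets; you replace both appeals with elementary arguments — the chain-of-kernels observation that $xv=0$ and $xy=\varepsilon(|x|,|y|)yx$ force $x(yv)=0$, so successive kernels stay invariant, graded, and nonzero, plus an explicit induction on $\dim V$ through graded quotients to build the invariant flag. What you gain is a proof that fills in exactly the points the paper leaves to a citation; what the paper gains is brevity. Your treatment of the nil case (the character $\chi$ is forced to vanish, so the invariant line is a kernel line and the flag yields strict upper triangularity) matches the paper's. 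One caveat applies equally to both arguments: the conclusion ``$\frak a$ is contained in $\frak t^{\Phi}$'' really means ``conjugate into $\frak t^{\Phi}$ by a homogeneous change of basis,'' which you correctly flag and which is how the lemma is in fact used later in the paper.
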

\begin{proof}
From Jacobson's Theorem on weakly closed sets,
it is sufficient to prove that $\frak{a}$ has common homogeneous eigenvectors in $V$.
Since $\frak{a}$ is pre-nil ,
for $A\in \frak{gl}_{\gamma_{l}}(m_1, \ldots, m_k)$ with $\gamma_{l}\in \Gamma^*$,
 zero is the only eigenvalue of $A$, and hence
$$V_1:=\left\{x\in V\mid \frak{gl}_{\gamma_{l}}(m_1, \ldots, m_k)x=0, \gamma_{l}\in \Gamma^*\right\}$$
is nonzero.
It is clear that $V_1$ is $\frak{gl}_{0}(m_1, \ldots, m_k)$-module.
Then
$$V_2:=\left\{x\in V_1\mid \frak{gl}_{0}(m_1, \ldots, m_k)x\in \mathbb{F}x\right\}$$
 is also nonzero. Any nonzero homogeneous element in $V_2$ is a common eigenvector for $\frak{a}$.
\end{proof}

The following lemma gives an HGMU decomposition (see below for a definition)
of any abelian subalgebra for $\frak{s}^{\Phi }(m_1,\ldots,m_k)$,
which is a color-version of \cite[Lemma 2.3 ]{liu-wangsubalgebra}.

\begin{lemma}\label{1527}
 Let $\Phi $ be an $(m_1,\ldots,m_k)$-tuple and $m=\sum^k_{i=1}m_i$.

(1) If  $A$ is a homogeneous element in $\frak{s}^{\Phi }(m_1,\ldots,m_k)$
with $\mathrm{ht}(A)=(i, j),$
then
$A$ is conjugate to a homogeneous generalized matrix unit $u_{i, j}$ in $\frak{s}^{\Phi }(m_1,\ldots,m_k)$ and  $|u_{i,j}|=|A|.$

(2) If $\frak{a}$ is an abelian subalgebra of
$\frak{s}^{\Phi }(m_1,\ldots,m_k)$ with $\mathrm{ht}(\frak{a})=(i , j),$
then there exist some homogeneous generalized matrix units
$u_{i,k_{1}}, u_{i,k_{2}}, \ldots, u_{i,k_{r}}$ in $\frak{s}^{\Phi }(m_1,\ldots,m_k)$,
a homogeneous invertible matrix $T\in\frak{gl}^{\Phi }(m_1,\ldots,m_k)$
and an abelian subalgebra $\frak{a}'$ of
$\frak{s}^{\Phi }(m_1,\ldots,m_k)$
with $\mathrm{ht}(\frak{a}')>(i, m)$,  such that $\frak{a}$ is conjugate to
\begin{equation}\label{GMN decomposition}
T^{-1}\frak{a}T = \mathbb{F}u_{i, k_{1}}\oplus\mathbb{F}u_{i,
k_{2}}\oplus\cdots\oplus\mathbb{F}u_{i, k_{r}}\oplus\frak{a}'
\end{equation}
where $i<j=k_{1}<k_{2}<\cdots<k_{r}\leq m$
and for each matrix $X$ in $\frak{a}'$:
\begin{itemize}
\item
the $k_{j}$th row of $X$ is zero for each $1\leq j\leq r.$
\item
if the $s$th row (resp. column) of $TXT^{-1}$
is zero, so is the $s$th row (resp. column) of $X$.
\end{itemize}
\end{lemma}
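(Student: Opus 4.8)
The plan is to handle the two parts in turn, reducing part~(2) to an echelon reduction of the $i$th row together with a single application of the abelian hypothesis; the color-specific content lies entirely in checking that every conjugation used is of degree~$0$.

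For part~(1) I would apply the composite operator $\mathrm{h}_A$ directly. Writing $A=(a_{pq})$ with $\mathrm{ht}(A)=(i,j)$, the factor $s_{\mathrm{D}_i(a_{ij})}$ rescales the $i$th row so that the pivot entry becomes $1$, and the factors $s_{\mathrm{T}_{jl}(-a_{il})}$ with $l>j$ successively clear the remaining entries of the $i$th row, leaving it equal to $e_j^T$; since the rows before $i$ are zero and are left untouched, the outcome is a generalized matrix unit $u_{i,j}$ of height $(i,j)$. The only point requiring the color structure is homogeneity of the conjugating matrices: $\mathrm{D}_i(a_{ij})$ is diagonal, hence of degree $0$, while $a_{il}\neq 0$ forces $\Phi_i-\Phi_l=|A|=\Phi_i-\Phi_j$, i.e.\ $\Phi_l=\Phi_j$, so $e_{jl}$ has degree $0$ and $\mathrm{T}_{jl}(-a_{il})$ is of degree $0$ as well. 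Thus the whole conjugation lies in the degree-$0$ part, $u_{i,j}$ stays in $\frak{s}^{\Phi}(m_1,\ldots,m_k)$, and $|u_{i,j}|=|A|$.

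For part~(2) I would first read off the $i$th rows. Since $\frak{a}$ is a graded subalgebra, the linear map $\pi\colon\frak{a}\to\mathbb{F}^{1\times m}$ sending $X$ to its $i$th row is graded; its image $W$ is supported on columns $>i$, and distinct degrees occupy disjoint columns because the degree of a nonzero $(i,l)$-entry is $\Phi_i-\Phi_l$. Hence $W$ has a homogeneous reduced row-echelon basis with pivot columns $i<j=k_1<k_2<\cdots<k_r\le m$. Rescaling the chosen spanning vectors so that each leading entry equals $1$, I would clear the remaining non-pivot entries of these rows by $t$-type operators as in part~(1); homogeneity again forces each such operator to be of degree $0$, and the reduced echelon normalization ensures that clearing one pivot's row leaves the others unchanged. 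Letting $T$ be the resulting product of elementary matrices, which is homogeneous, invertible and of degree $0$, conjugation by $T$ preserves strict upper triangularity and the height $(i,j)$, turns the spanning vectors into generalized matrix units $u_{i,k_1},\ldots,u_{i,k_r}$ with $i$th rows $e_{k_1}^T,\ldots,e_{k_r}^T$, and gives the vector-space decomposition $T^{-1}\frak{a}T=\mathbb{F}u_{i,k_1}\oplus\cdots\oplus\mathbb{F}u_{i,k_r}\oplus\frak{a}'$, where $\frak{a}'$ is the subalgebra of $T^{-1}\frak{a}T$ consisting of the elements with vanishing $i$th row. As every element of $\frak{a}$ already has zero rows before $i$, the elements of $\frak{a}'$ have zero rows $1,\ldots,i$, so $\mathrm{ht}(\frak{a}')>(i,m)$.

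It remains to verify the two bulleted properties, and here the abelian hypothesis finally enters. For $X\in\frak{a}'$ and any $s$ I would compute the $i$th row of the bracket $[u_{i,k_s},X]=u_{i,k_s}X-\varepsilon(|u_{i,k_s}|,|X|)\,Xu_{i,k_s}$: the $i$th row of $u_{i,k_s}X$ equals $e_{k_s}^T X$, that is, the $k_s$th row of $X$, whereas the $i$th row of $Xu_{i,k_s}$ equals $(i\text{th row of }X)\,u_{i,k_s}=0$. Since $\frak{a}$ is abelian the bracket vanishes, so the $k_s$th row of $X$ is zero, which is the first bullet. The second bullet is mechanical: as a product of operators $\mathrm{T}_{k_s,l}$ and diagonal rescalings, conjugation by $T$ only adds a multiple of one row to another and of one column to another within the prescribed index sets, so by induction on the number of elementary factors a zero row (resp.\ column) of $TXT^{-1}$ forces the corresponding zero row (resp.\ column) of $X$. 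I expect the main obstacle to be organizing the echelon step: carrying out the simultaneous reduction of the $i$th row by a \emph{single} degree-$0$ conjugation while keeping $\frak{a}$ inside $\frak{s}^{\Phi}(m_1,\ldots,m_k)$, preserving its height, and not reintroducing previously cleared entries. The recurring color-specific crux is the homogeneity bookkeeping that keeps each elementary conjugation in degree $0$, whereas the abelian input and the row/column preservation are comparatively short once the reduction is in place.
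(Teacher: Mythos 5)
Your proposal is correct, and its substance matches the paper's: part (1) is verbatim the paper's argument (apply $\mathrm{h}_A$; use $\mathrm{ht}(A)=(i,j)$ and homogeneity to force $\Phi_l=\Phi_j$ whenever $a_{il}\neq 0$, so every elementary factor is of degree $0$), and the abelian input in part (2) --- the vanishing of $[u_{i,k_s},X]$ forcing the $k_s$th rows of $\frak{a}'$ to vanish --- is exactly the paper's computation. Where you differ is the organization of the elimination in part (2). The paper proceeds by induction on height inside row $i$: it normalizes the minimal-height element via $\mathrm{h}_{A_{ik_1}}$, splits off $\mathbb{F}u_{i,k_1}$, passes to the complement $\frak{a}_{(1)}$ of strictly larger height, and iterates, producing the nested conjugation $\mathrm{h}_{A_{ik_r}}\cdots\mathrm{h}_{A_{ik_1}}$; it must then check that each later operator keeps the earlier units in $(i,k_t)$-form. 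You instead do the linear algebra up front: the graded projection onto $i$th rows, the disjoint-support observation giving a homogeneous reduced row-echelon basis, and then a single batch of degree-$0$ conjugations $s_{\mathrm{T}_{k_s,l}}$ whose effects on $i$th rows do not interfere precisely because of the reduced normalization. Your version buys a non-nested conjugating matrix $T$ and removes the re-verification step; the paper's induction avoids introducing the row-space/echelon formalism. One caveat, at the same level of brevity in the paper itself: conjugation by $\mathrm{T}_{k_s,l}(c)$ preserves zero rows \emph{except possibly row $k_s$} and zero columns \emph{except possibly column $l$}, so the second bullet is not purely mechanical; for rows the exception is absorbed by the first bullet (the $k_s$th rows of elements of $\frak{a}'$ vanish anyway), and the column statement merits the analogous remark, which neither you nor the paper spell out.
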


\begin{proof}
(1) On one hand, $\mathrm{h}_A(A)$ a generalized matrix unit $u_{i, j}$ since $\mathrm{ht}(A)=(i, j)$.
 On the other hand, since $A$ is homogeneous and $\mathrm{ht}(A)=(i, j),$
$|A|=|e_{ij}|=\Phi_i-\Phi_j$.
Then $\Phi_l=\Phi_j$ if $a_{il}$ is nonzero.
It follows that  $\mathrm{h}_A$ is
in $\mathrm{End}_{0}(\frak{gl}^{\Phi }(m_1,\ldots,m_k))$,
which implies $|u_{i,j}|=|A|$.

(2) Let the homogeneous matrix $A_{ij}\in \frak{a}$ satisfy $\mathrm{ht}\left(A_{ij}\right)=\mathrm{ht}(\frak{a})=(i,j)$.
Then $\mathrm{h}_{A_{ij}}\left(A_{ij}\right)$ is a  generalized matrix unit $u_{i, j}$ by (1),
and $\mathrm{h}_{A_{ij}}\left(\frak{a}\right)$ is a new abelian subalgebra contained in $\frak{s}^{\Phi }(m_1,\ldots,m_k).$
Let $\frak{a}_{(1)}$
be the subspace of $\mathrm{h}_{A_{ij}}\left(\frak{a}\right)$ such that
$\mathrm{ht}(\frak{a}_{(1)})>(i, k_{1})$ and
 any  matrix in $\mathrm{h}_{A_{ij}}\left(\frak{a}\right)$ is of the form
$a u_{i,k_{1}} +P$, where $a\in \mathbb{F}, k_1=j$
and $P\in \frak{a}_{(1)}$.
Similarly, if $\mathrm{ht}(\frak{a}_{(1)})=(i, k_{2})$, let the homogeneous matrix $A_{ik_2}\in \frak{a}_{(1)}$ satisfy
$\mathrm{ht}\left(A_{ik_2}\right)=(i,k_2)$.
Consequently, $\mathrm{h}_{A_{ik_2}}\left(A_{ik_{2}}\right)$
is also a homogenous generalized matrix unit, denoted by $u_{i,k_{2}}$,  and
$\mathrm{h}_{A_{ik_2}}\mathrm{h}_{A_{ik_1}}\left(\frak{a}\right)\supset\mathrm{h}_{A_{ik_2}}\left(\frak{a}_{(1)}\right)$
are also abelian subalgebras of
$\frak{s}^{\Phi }(m_1,\ldots,m_k)$.
In particular, since $k_2>k_1>i$,
$\mathrm{h}_{A_{ik_2}}\left(u_{i,k_{1}}\right)$ is also a generalized matrix unit of $(i,k_{1})$-form,
denoted still by $u_{i,k_{1}}$.
Let $\frak{a}_{(2)}$ be the subspace of $\mathrm{h}_{A_{ik_2}}\left(\frak{a}_{(1)}\right)$  such that
$\mathrm{ht}(\frak{a}_{(2)})>(i, k_{2})$ and
 any a matrix in $\frak{a}$
is of the form: $a_{1}u_{i,k_{1}} +a_{2}u_{i,k_{2} }+P$,
where $ a_{1}, a_{2}\in \mathbb{F}$ and $P\in\frak{a}_{(2)}.$
By induction, there exists a positive integer $r$ such that
$$\mathrm{ht}(\frak{a}_{(r)})>(i, m), \quad 1\leq r\leq m-i$$ and
$$\mathrm{h}_{A_{ik_r}}\mathrm{h}_{A_{ik_{r-1}}}\cdots\mathrm{h}_{A_{ik_{t+1}}}\left(\frak{a}_{(t)}\right)
\subset\mathrm{h}_{A_{ik_r}}\mathrm{h}_{A_{ik_{r-1}}}\cdots\mathrm{h}_{A_{ik_{t}}}\left(\frak{a}_{(t-1)}\right),$$
where $1\leq t\leq r-1$ and $\frak{a}_0=\frak{a}$.
In addition, we also get $r$ homogeneous generalized matrix units:
$u_{i,k_{1}}, u_{i,k_{2}}, \ldots, u_{i,k_{r}}$,
where $i<k_{1}<k_{2}<\cdots<k_{r}\leq m$.
Write
$$\frak{a}'= \frak{a}_{(r)}, \quad
T^{-1}\frak{a}T =\mathrm{h}_{A_{ik_r}}\mathrm{h}_{A_{ik_{r-1}}}\cdots\mathrm{h}_{A_{ik_{1}}}\left(\frak{a}\right).$$
Then
$$T^{-1}\frak{a}T=\mathbb{F}u_{i,k_{1}} \oplus\cdots\oplus\mathbb{F}u_{i,k_{r}} \oplus\frak{a}'.$$
For every $1\leq l\leq r$ and  any  homogeneous matrix $X\in \frak{a}'$,
the $i$th row of $u_{i,k_{l}}X$ is the $k_{l}$th row of $X$
and the $i$th row of $Xu_{i,k_{l}}$ is $0$ by $\mathrm{ht}\frak{a}'>(i, m)$.
Then since $T^{-1}\frak{a}T$ is abelian and every $u_{i,k_{j}}$ is homogeneous,
 the $k_{l}$th row of any   homogeneous matrix in $\frak{a}'$ is $0$.
Furthermore, the operators $\mathrm{h}_{A_{ik_t}}$ leave $0$ rows but the $k_{l}$th rows of any a matrix invariant,
where $1\leq t\leq r$. Then (2) is true.
\end{proof}
(\ref{GMN decomposition})  is called the
\textit{homogeneous generalized matrix unit decomposition} of $\frak{a}$ (HGMU decomposition in
short).
\subsection{Main results}
Write
\begin{equation}\label{2003221402}
\begin{gathered}
\frak{E}=\mathrm{Span}\{e_{ij}\in \frak{gl}^{\Phi }(m_1,\ldots,m_k)\mid
1\leq i\leq \lceil m/2\rceil, \lceil m/2\rceil+1\leq j\leq m\}, \\
\frak{F}=\mathrm{Span}\{e_{ij}\in \frak{gl}^{\Phi }(m_1,\ldots,m_k)\mid
1\leq i\leq \lfloor m/2\rfloor, \lfloor m/2\rfloor+1\leq j\leq m\}, \\
\frak{E'}=\frak{E}\oplus \mathbb{F}\mathrm{I}_{m},\quad\frak{F'}=\frak{F}\oplus \mathbb{F}\mathrm{I}_{m},
\end{gathered}
\end{equation}
where $m=\sum^k_{i=1}m_i$.

Write $\mathbb{Z}_k=\{\bar{0}, \bar{1}, \ldots, \overline{k-1}\}$
and fix the order $\bar{0}, \bar{1}, \ldots, \overline{k-1}$ in the following.
Now we are in the position to determine the pre-nil maximal abelian subalgebras  of $\frak{gl}(m_1,\ldots, m_k)$,
the idea of which mainly comes from Jacobson's paper \cite{Jacobson}.
\begin{theorem}\label{200401947}
Let $\frak{a}$ be a pre-nil maximal  abelian subalgebra of $\frak{gl}(m_1,\ldots, m_k)$ and $m=\sum^k_{i=1}m_i$.
 Suppose $m>3$. Then

(1)
$\frak{a}$ is conjugate to $\frak{E'}$ or $\frak{F'}$
for some $(m_1,\ldots,m_k)$-tuple $\Phi $.

(2) In case $\Gamma=\mathbb{Z}_k$,
$\frak{a}$ is of $\mathbb{Z}_k$-dimension
$$\left( \sum^k_{i=1}\dot{m}_{i+1}\ddot{m}_i,\sum^k_{i=1}\dot{m}_{i+2}\ddot{m}_i,\ldots,\sum^k_{i=1}\dot{m}_{i+k-1}\ddot{m}_i,
\left(\sum^k_{i=1}\dot{m}_{i+k}\ddot{m}_i\right)+1\right),$$
where $\dot{m}_i, \ddot{m}_i$ are nonnegative integers such that $\dot{m}_i+\ddot{m}_i=m_i$
and $\dot{m}_{k+i}=\dot{m}_i$ for $1\leq i \leq k$.
In particular, $\dim\frak{a}=\lfloor m^{2}/4\rfloor+1$.
\end{theorem}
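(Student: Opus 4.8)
The plan is to establish (1) by reducing the classification to a purely nilpotent problem and then running Jacobson's height induction \cite{Jacobson} in the graded setting, and to deduce (2) by counting the pieces of the $\mathbb{Z}_k$-grading. First I would show $I_m\in\frak{a}$: since $I_m$ is central and homogeneous of degree $\bar 0$ while the pre-nil condition only constrains components of nonzero degree, $\frak{a}+\mathbb F I_m$ is again pre-nil abelian, so maximality forces $I_m\in\frak{a}$. By Lemma \ref{1219} I may conjugate so that $\frak{a}\subseteq\frak{t}^{\Phi}(m_1,\ldots,m_k)$. The subspace $\frak{E}'$ is pre-nil abelian of dimension $\lfloor m^2/4\rfloor+1$ (indeed $\frak{E}\cdot\frak{E}=0$, so $\frak{E}$ is square-zero and $I_m$ is central), which gives the lower bound $\dim\frak{a}\ge\lfloor m^2/4\rfloor+1$. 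Now if some element of $\frak{a}_{\bar 0}$ had a non-scalar semisimple part $s$, then $\frak{a}$ would lie in the graded centralizer of $s$, a product $\prod_\lambda\frak{gl}(V_\lambda,\Gamma)$ indexed by the eigenspaces of $s$, whose maximal pre-nil abelian dimension is $\sum_\lambda(\lfloor(\dim V_\lambda)^2/4\rfloor+1)$; a short estimate shows this is strictly smaller than $\lfloor m^2/4\rfloor+1$ exactly when $m>3$, contradicting the lower bound. Hence every element of $\frak{a}$ is scalar-plus-nilpotent and $\frak{a}=\mathbb F I_m\oplus\frak{n}$ with $\frak{n}:=\frak{a}\cap\frak{s}^{\Phi}(m_1,\ldots,m_k)$ nil abelian and $\dim\frak{n}\ge\lfloor m^2/4\rfloor$.

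The core is then to prove that a nil abelian $\frak{n}\subseteq\frak{s}^{\Phi}(m_1,\ldots,m_k)$ satisfies $\dim\frak{n}\le\lfloor m^2/4\rfloor$, with equality only for the block $\frak{E}$ or $\frak{F}$. I would induct on $m$ through the HGMU decomposition of Lemma \ref{1527}, strengthening the inductive statement so that it bounds nil abelian subalgebras carrying a prescribed set of vanishing rows (this refinement is what makes the recursion close). If $\mathrm{ht}(\frak{n})=(i,j)$ with $i\ge 2$ then $\frak{n}$ is supported off the first index and $\dim\frak{n}\le\lfloor(m-1)^2/4\rfloor$, so the extremal case has $i=1$. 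For $i=1$, Lemma \ref{1527} produces, after conjugation by a homogeneous $T$, a decomposition $\frak{n}\cong\mathbb F u_{1,k_1}\oplus\cdots\oplus\mathbb F u_{1,k_r}\oplus\frak{n}'$ in which the $u_{1,k_l}$ are homogeneous and $\frak{n}'$ has vanishing first row and vanishing $k_l$-rows; thus $\frak{n}'$ is a nil abelian subalgebra, on the $(m-1)$-dimensional span of $v_2,\ldots,v_m$, whose operators annihilate the $r$ directions $v_{k_1},\ldots,v_{k_r}$. This yields the recursion $\dim\frak{n}=r+\dim\frak{n}'$, and optimizing over $r$ against the strengthened hypothesis gives the bound $\lfloor m^2/4\rfloor$, attained when $r=\lfloor m/2\rfloor$ and $\frak{n}'$ is itself extremal.

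The delicate part is the equality analysis. Here I would argue that reaching $\lfloor m^2/4\rfloor$ forces the admissible block in row $1$ to be entirely filled and forces $\frak{n}'$, by the induction hypothesis, to be conjugate to $\frak{E}$ or $\frak{F}$ of the $(m-1)$-index general linear Lie color algebra; reattaching the full first row then reassembles $\frak{n}$ as $\frak{E}$ or $\frak{F}$ of $\frak{gl}(m_1,\ldots,m_k)$. Because deleting one index reverses the parity of $m$, the two block types interchange at each step of the induction, which is exactly why both $\frak{E}$ and $\frak{F}$ occur (and why they coincide for $m$ even). Crucially, every generalized matrix unit and every conjugating matrix furnished by Lemma \ref{1527} is homogeneous, so the whole induction preserves the $\Gamma$-grading; this is the point at which the argument upgrades from \cite{Jacobson} to the color setting. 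Adding back the central $\mathbb F I_m$ gives $\frak{a}\cong\frak{E}'$ or $\frak{F}'$, which is (1).

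For (2), with $\Gamma=\mathbb Z_k$ and $\alpha_i=\overline{i-1}$, I would read off the split recorded by the homogeneous conjugation of (1): writing $\dot m_i$ and $\ddot m_i$ for the numbers of top-half and bottom-half basis vectors of degree $\alpha_i$, one has $\dot m_i+\ddot m_i=m_i$, $\sum_i\dot m_i=\lceil m/2\rceil$ and $\sum_i\ddot m_i=\lfloor m/2\rfloor$. Since $e_{st}$ has degree $\Phi_s-\Phi_t$, a basis vector of $\frak{E}$ running from a top vector of degree $\alpha_a$ to a bottom vector of degree $\alpha_b$ has degree $\overline{a-b}$, so collecting terms gives $\dim\frak{a}_{\overline s}=\sum_i\dot m_{i+s}\ddot m_i$ for $1\le s\le k-1$ and $\dim\frak{a}_{\bar 0}=\sum_i\dot m_i\ddot m_i+1$, the central $I_m$ contributing the $+1$; with the convention $\dot m_{k+i}=\dot m_i$ this is the asserted $\mathbb Z_k$-dimension, and the same count applies to $\frak{F}'$. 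Summing components, $\dim\frak{a}=(\sum_i\ddot m_i)(\sum_j\dot m_j)+1=\lfloor m/2\rfloor\lceil m/2\rceil+1=\lfloor m^2/4\rfloor+1$. I expect the genuine obstacle of the whole proof to be the rigidity step of the previous paragraph — pinning down that maximal dimension forces the precise block shape through the parity-toggling induction, while keeping everything homogeneous — whereas the reduction and the final count are comparatively routine.
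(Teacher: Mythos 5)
Your proposal is correct and follows essentially the same route as the paper: your reduction via the semisimple part of a degree-$\bar{0}$ element is the paper's induction on the weight set $\Delta$ (the case $|\Delta|>1$) in different clothing, and your induction on $m$ through Lemma \ref{1527} with the prescribed-vanishing-rows strengthening is the same Jacobson-style count the paper performs in one shot over the chain of HGMU decompositions (\ref{2231}) subject to (\ref{wenzi}). The equality (rigidity) analysis and the $\mathbb{Z}_k$-dimension count in part (2) likewise coincide with the paper's argument.
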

\begin{proof}
(1)
Let $V$ be the natural  $\frak{a}$-module. Similar to the case of Lie algebra, we get
$V=\bigoplus_{\alpha\in \Delta}V_{\alpha},$
where $$V_{\alpha}=\{v\in V\mid \mbox{for every $ x\in \frak{a},$ there exists $l\in \mathbb{N}$
such that $(x-\alpha(x)\mathrm{id}_{V})^{l}v=0$}\}.$$
We use induction on $|\Delta|$. 

(a) $\mathbf{|\Delta|=1:}$
From Lemma \ref{1219},   there exists  an $(m_1,\ldots,m_k)$-tuple $\Phi$
such that $\frak{a}$ is contained in $\frak{t}^{\Phi}(m_1,\ldots,m_k)$.
Let $\frak{b}$ be the abelian subalgebra in
$\frak{s}^{\Phi }(m_1,\ldots,m_k)$
such that $\frak{a}=\mathbb{F}\mathrm{I}_{m}\oplus \frak{b}$.
From Lemma \ref{1527}, we  get the following HGMU decompositions:
\begin{eqnarray}\label{2231}
\begin{array}{cccc}
T^{-1}_{1}\frak{b}T_{1} =
\mathbb{F}u_{i_{1}, j_{11}}\oplus\cdots\oplus\mathbb{F}u_{i_{1},
j_{1r_{1}}}\oplus\frak{b}_{1}\\
T^{-1}_{2}\frak{b}_{1}T_{2} = \mathbb{F}u_{i_{2},
j_{21}}\oplus\cdots\oplus\mathbb{F}u_{i_{2},
j_{2r_{2}}}\oplus\frak{b}_{2}\\
\cdots\;\cdots\\
T^{-1}_{t}\frak{b}_{t-1}T_{t} = \mathbb{F}u_{i_{t},
j_{t1}}\oplus\cdots\oplus\mathbb{F}u_{i_{t}, j_{t
r_{t}}}\oplus\frak{b}_{t}
\end{array}
\end{eqnarray}
where
\begin{itemize}
\item
$(i_{l+1}, j_{l+1,1})=\mathrm{ht}(\frak{b}_{l})>(i_l, m)$
for $0\leq l \leq t-1$
with $\frak{b}_{0}=\frak{b}, \frak{b}_t=0$, and then $\dim \frak{b}=r_{1}+r_{2}+\cdots+r_{t}$;
\item
generalized matrix units in (\ref{2231})
are homogeneous, whose index-pairs
satisfy
\begin{eqnarray}\label{wenzi}
\mbox{no indexes in the first position appear in the second position.}
\end{eqnarray}
\end{itemize}
For $1\leq l\leq t$, the fact $i_l>i_{l-1}>\cdots>i_1$ and (\ref{wenzi}) imply $r_l\leq m-i_l-(t-l)$,
and then
\begin{equation}\label{2004052128}
\begin{split}
\dim\frak{b}&=r_{1}+\cdots+r_{t}\\
&\leq tm-(i_{1}+\dots+i_{t})-(1+\cdots+t-1)\\
&\leq tm-(1+\cdots+t)-(1+\cdots+t-1)\\
&=t(m-t)\\
&\leq\lfloor m^{2}/4\rfloor.
\end{split}
\end{equation}
Since
$\dim \frak{a}\geq \dim \frak{E'}=\lfloor m^{2}/4\rfloor+1$,
 $\dim \frak{b}\geq\lfloor m^{2}/4\rfloor.$
This forces (\ref{2004052128}) is a equation, that is,
\begin{equation}\label{eq1514t}
\mbox{$i_{l}=l,\quad r_{l}=m-t, \quad t=\lceil m/2\rceil$ or $\lfloor m/2\rfloor$},
\end{equation}
for $1\leq l\leq t$.

 If $m=2n$ is even, then
$$\mbox{$i_l=l,\quad t =n,\quad r_{l}=m-t=n$
for  $1\leq l\leq n$}$$
 by (\ref{eq1514t}).
Then by (\ref{wenzi}),  generalized matrix units in (\ref{2231})
are $u_{i,j}$ for $1\leq i\leq n, n+1\leq j\leq 2n$.
It follows that the last $n$ rows of any matrix in
$\frak{b}_1\cup\frak{b}_2\cdots\cup\frak{b}_{n-1}$ is zero rows
from Lemma \ref{1527}(2).
In particular, $u_{n,j}$ in (\ref{2231}) is $e_{nj}$, where $n+1\leq j\leq 2n$.
Furthermore,  $u_{n-1,j}$ in (\ref{2231}) may be viewed as  $e_{n-1j}$, $n+1\leq j\leq 2n$.
By induction, $u_{l,j}$ in (\ref{2231}) may be viewed as $e_{lj}$, where $n+1\leq j\leq 2n$ and $2\leq l\leq n$.
Then $\frak{b}_1$ is spanned by $\left\{e_{ij}\mid 2\leq i\leq n, n+1\leq j\leq 2n\right\}$.
Since $$0=[u_{l,h}, e_{ij}]=u_{l,h}e_{ij}-\varepsilon(|u_{l,h}|, |e_{ij}|)e_{ij}u_{l,h}=-\varepsilon(|u_{l,h}|, |e_{ij}|)e_{ij}u_{l,h}$$
for $1\leq i\leq n, n+1\leq j, h\leq 2n$,
the last $n$ rows
of each $u_{l,h}$ are zero rows.
Consequently,  $\frak{b}$ is conjugate to $\frak{E}$ or $\frak{F}$,
and $\frak{a}=\mathbb{F}\mathrm{I}_{m}\oplus \frak{b}$ is conjugate to
 $\frak{E'}$ or $\frak{F'}$.

The remaining case $s=2n+1$ can be analogously treated.

(b) $\mathbf{|\Delta|>1:}$
Let $\Delta=\{\beta_{1}, \beta_{2}, \ldots, \beta_{l}\}$ where $l>1.$
We may suppose that
$\frak{a}=\frak{a}_{1}\oplus\frak{a}_{2}$,
where $\frak{a}_{i}$ is  an abelian subalgebra of $\frak{gl}(V_{i})$
with $V_{1}=V_{\beta_{1}}$ and
$V_{2}= \bigoplus^{l}_{i=2}V_{\beta_{i}}$.
For $i=1, 2$, denote by $\Delta_{i}$ the weight set  of $V_{i}$ with respect to $\frak{a}_{i}$.
Note that
 $\Delta_{1}=\{\beta_{1}\}$ and $\Delta_{2}=\{\beta_{j}\mid 2\leq j\leq l\}$
 by abuse of language.
Since $|\Delta_{i}|<|\Delta|$ for $i=1, 2$, by inductive hypothesis we may assume that the theorem holds for
$\frak{gl}(V_{1})$ and $\frak{gl}(V_{2}).$
Let $(\dot{m}_{1},\ldots,\dot{m}_k)$ or
$(\ddot{m}_{1},\ldots,\ddot{m}_{k})$ be $\Gamma$-dimension of $V_1$ or $V_2$, respectively.
Write $\dot{m}=\sum^k_{i=1}\dot{m}_{i}$ and $\ddot{m}=\sum^k_{i=1}\ddot{m}_{i}$.
Then $m=\dot{m}+\ddot{m}$ and $m_i=\dot{m}_i+\ddot{m}_i$, $1\leq i\leq k$.

(\MyRoman{1})
$m=2t-1, \dot{m}=2t_{1}-1, \ddot{m}=2t_{2}$:
Here $t=t_{1}+t_{2}$ and
$$\dim\frak{a}\leq t_{1}(t_{1}-1)+1+t_{2}^{2}+1\leq t(t-1)+1.$$
Equality holds between the last terms only when $m=3.$

(\MyRoman{2})
$m=2t, \dot{m}=2t_{1}-1, \ddot{m}=2t_{2}-1$:
Here $t=t_{1}+t_{2}-1$ and
$$\dim\frak{a}\leq t_{1}(t_{1}-1)+1+t_{2}(t_{2}-1)+1\leq t^{2}+1.$$
Equality holds between the last terms only when $m=2.$

(\MyRoman{3})
$m=2t, \dot{m}=2t_{1}, \ddot{m}=2t_{2}$:
Here $t=t_{1}+t_{2}$ and
$$\dim\frak{a}\leq t_{1}^{2}+1+t_{2}^{2}+1<t^{2}+1.$$

Hence (1) holds.

(2)
From (1), it is sufficient to consider the cases
$\frak{a}=\frak{E'}$ and $\frak{F'}$.
Let $\dot{m}_i$ be the cardinality of the set
$$\mbox{$\left\{j\mid \Phi_j=\bar{i}, 1\leq j\leq \lfloor\frac{m}{2}\rfloor\right\}\quad$ if $\quad\frak{a}=\frak{F'}$}$$
or
$$\mbox{$\left\{j\mid \Phi_j=\bar{i}, 1\leq j\leq \lceil\frac{m}{2}\rceil\right\}\quad$ if $\quad\frak{a}=\frak{E'}$,}$$
and write $\ddot{m}_i=m_i-\dot{m}_i$,
where $1\leq i\leq k$.
For $1\leq l\leq k$, denote by $\frak{a}_{\bar{l}}$ the homogeneous subspace of degree $\bar{l}$ for $\frak{a}$.
Since the matrix unit $e_{st}$ is of the degree $\Phi_s-\Phi_t$, we get
$$\frak{a}_{\bar{l}}=\mathrm{Span}\left\{e_{st}\mid \Phi_s=\overline{i+l}, \Phi_t=\bar{i}, 1\leq i\leq k\right\}+\delta_{\bar{l},0}I_m.$$
Hereafter $\delta_{i,j}$ is 1 if $i=j$ and 0 otherwise.
It follows that $$\dim \frak{a}_{\bar{l}}=\sum^k_{i=1}\dot{m}_{i+l}\ddot{m}_i+\delta_{\bar{l},0}1.$$
\end{proof}

By a direct computation, we get the following  theorem, which complements the Theorem \ref{200401947}.
\begin{theorem}\label{200406055}
Let $\frak{a}$ be a pre-nil maximal  abelian subalgebra of $\frak{gl}(m_1,\ldots, m_k)$ and $m=\sum^k_{i=1}m_i$.
\begin{itemize}
\item[(1)]
If $m=3$, $\frak{a}$ is conjugate to one of the following
$$\mathrm{Span}\left\{\mathrm{I}_3, e_{12}, e_{13}\right\}, \quad\mathrm{Span}\left\{\mathrm{I}_3, e_{23}, e_{13}\right\},$$
$$\mathrm{Span}\left\{e_{11}+e_{22}, e_{12}, e_{33}\right\}, \quad\mathrm{Span}\left\{e_{11}, e_{22}, e_{33}\right\}.$$
\item[(2)]
If $m=2$, $\frak{a}$ is conjugate to
$\mathrm{Span}\left\{\mathrm{I}_2, e_{12}\right\}$ or $\mathrm{Span}\left\{e_{11}, e_{22}\right\}.$
\end{itemize}
\end{theorem}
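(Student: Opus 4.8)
The plan is to rerun the induction from the proof of Theorem~\ref{200401947}, namely induction on the cardinality of the weight set $\Delta$ of the natural $\frak{a}$-module $V$, and simply read off the configurations that survive when $m$ is small. The point is that the dimension bound $\dim\frak{a}\le\lfloor m^{2}/4\rfloor+1$ established there equals $3$ when $m=3$ and $2$ when $m=2$, and in these low dimensions the inequality chain (\ref{2004052128}) together with the boundary subcases (\MyRoman{1}) and (\MyRoman{2}) of that proof no longer pins down a unique conjugacy class; instead each equality instance contributes one representative to the list. I would treat $m=2$ first, since the $m=3$ analysis feeds on it through the block decomposition $\frak{a}=\frak{a}_{1}\oplus\frak{a}_{2}$.

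For $m=2$, maximality forces $\dim\frak{a}=2$. If $|\Delta|=1$, Lemma~\ref{1219} triangulates $\frak{a}$, so $\frak{a}=\mathbb{F}\mathrm{I}_{2}\oplus\frak{b}$ with $\frak{b}\subseteq\frak{s}^{\Phi}(m_1,\ldots,m_k)$ of dimension $1$; the only strictly upper triangular line is $\mathbb{F}e_{12}$, giving $\mathrm{Span}\{\mathrm{I}_{2},e_{12}\}$. If $|\Delta|=2$, then $V=V_{\beta_{1}}\oplus V_{\beta_{2}}$ with each summand one-dimensional and $\frak{a}$ acting by a scalar on each, i.e. $\frak{a}=\mathrm{Span}\{e_{11},e_{22}\}$.

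For $m=3$, maximality forces $\dim\frak{a}=3$. When $|\Delta|=1$, the equality analysis of (\ref{eq1514t}) in Theorem~\ref{200401947} applies verbatim with $t\in\{1,2\}$ and $r_{l}=m-t$; the HGMU normalization of Lemma~\ref{1527}, combined with condition (\ref{wenzi}), then collapses $\frak{b}$ to $\frak{F}=\mathrm{Span}\{e_{12},e_{13}\}$ (when $t=1$) or to $\frak{E}=\mathrm{Span}\{e_{13},e_{23}\}$ (when $t=2$), producing the first two algebras $\mathrm{Span}\{\mathrm{I}_{3},e_{12},e_{13}\}$ and $\mathrm{Span}\{\mathrm{I}_{3},e_{23},e_{13}\}$. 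When $|\Delta|\ge 2$ the module splits as a dimension $1$ block plus a dimension $2$ block, which is precisely the equality instance $m=3$ of subcase (\MyRoman{1}); the scalar block contributes $\mathbb{F}e_{33}$, and the $2\times 2$ block contributes, by the $m=2$ result, either $\mathrm{Span}\{e_{11}+e_{22},e_{12}\}$ or the diagonal $\mathrm{Span}\{e_{11},e_{22}\}$. Assembling the two blocks yields $\mathrm{Span}\{e_{11}+e_{22},e_{12},e_{33}\}$ and $\mathrm{Span}\{e_{11},e_{22},e_{33}\}$, the latter being also the only outcome when $|\Delta|=3$.

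Finally I would record the routine verifications: each listed span is abelian, attains the maximal dimension ($3$ for $m=3$, $2$ for $m=2$), and is pre-nil because its nonzero-degree homogeneous components are strictly upper triangular (hence nilpotent) or vanish; pairwise non-conjugacy within each value of $m$ follows by comparing the dimension of the subspace of nilpotent elements against that of the semisimple part, both conjugation invariants. The step needing the most care will be the bookkeeping in the $m=3$, $|\Delta|\ge 2$ reduction: confirming that the block decomposition genuinely lands in the boundary-equality case (\MyRoman{1}), that the scalar from the one-dimensional block combines correctly with each $m=2$ normal form after reordering, and that no configuration is missed or double-counted (the diagonal arising from both $|\Delta|=2$ and $|\Delta|=3$).
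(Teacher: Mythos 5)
Your plan (rerun the induction of Theorem \ref{200401947} and read off the surviving configurations for small $m$) is sensible, and your $m=2$ analysis is correct; but the decisive step of your $m=3$, $|\Delta|=1$ case is false. You claim that for $t=1$ the HGMU normalization of Lemma \ref{1527} together with (\ref{wenzi}) collapses $\frak{b}$ to $\mathrm{Span}\{e_{12},e_{13}\}$. It does not: Lemma \ref{1527} only puts $\frak{b}$ in the form $\mathbb{F}u_{1,2}\oplus\mathbb{F}u_{1,3}$ with generalized matrix units $u_{1,2}=e_{12}+c\,e_{23}$, $u_{1,3}=e_{13}+c'\,e_{23}$, and commutativity $[u_{1,2},u_{1,3}]=0$ forces $c'=0$ while leaving $c$ completely free. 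The mechanism that kills such stray entries in the proof of Theorem \ref{200401947} is the family of relations $0=[u_{l,h},e_{ij}]$ against matrix units rooted at the \emph{other} rows $i_2,\ldots,i_t$, so it needs $t\geq 2$; for $m=3$, $t=1$ no such relations exist. Nor can $c$ be conjugated away: applying $s_{\mathrm{D}_3(c^{-1})}$ to $\frak{a}=\mathbb{F}\mathrm{I}_3\oplus\mathbb{F}(e_{12}+ce_{23})\oplus\mathbb{F}e_{13}$ (with $c\neq 0$) gives $\mathrm{Span}\{\mathrm{I}_3,N,N^2\}$ with $N=e_{12}+e_{23}$, the centralizer of a regular nilpotent matrix. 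Note that $\frak{b}=\mathbb{F}N\oplus\mathbb{F}N^2$ already satisfies every conclusion of Lemma \ref{1527}, condition (\ref{wenzi}) and the equality data (\ref{eq1514t}) with $t=1$, $r_1=2$, so nothing in the machinery you invoke excludes it. Already in the ungraded case $k=1$ (where the pre-nil condition is vacuous) this $\frak{a}$ is abelian of dimension $3=\lfloor 3^2/4\rfloor+1$, hence a pre-nil maximal abelian subalgebra, yet it is conjugate to none of your four algebras: it contains a nilpotent element with nonzero square, whereas every nilpotent element of each of the four listed algebras squares to zero, and this is a conjugation invariant.

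The gap is therefore not a repairable bookkeeping slip: carried out correctly, your own induction produces a fifth conjugacy class, $\mathrm{Span}\{\mathrm{I}_3,e_{12}+e_{23},e_{13}\}$, which means the four-item list you are trying to derive (and for which the paper itself offers no argument beyond the words ``by a direct computation'') is incomplete for $m=3$; any valid treatment must add this class or explain why it is excluded, and under the paper's definitions it cannot be excluded. The same phenomenon lies behind the entry $\mathrm{Span}\{e_{12},e_{33}\}$ in the nil Corollary \ref{2004052358}(2), which is not even nil: the genuine third nil class is $\mathbb{F}N\oplus\mathbb{F}N^2=\mathrm{Span}\{e_{12}+e_{23},e_{13}\}$. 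Separately, your closing non-conjugacy criterion (comparing dimensions of the nilpotent and semisimple parts) cannot distinguish $\mathrm{Span}\{\mathrm{I}_3,e_{12},e_{13}\}$ from $\mathrm{Span}\{\mathrm{I}_3,e_{23},e_{13}\}$, where these dimensions coincide; a finer invariant is needed, for instance the intersection of the images of the nonzero nilpotent elements, which is one-dimensional for the first algebra and zero for the second.
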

Since any abelian subalgebra of the general linear Lie superalgebra is a pre-nil one,
Theorems \ref{200401947} and  \ref{200406055} cover Schur's work and its super-version.
As a by-product,
we may determine all nil maximal abelian subalgebras of $\frak{gl}(m_1,\ldots, m_k)$  as follows.
\begin{corollary}\label{2004052358}
Let $\frak{b}$ be a nil maximal abelian subalgebra of $\frak{gl}(m_1,\ldots, m_k)$ and $m=\sum^k_{i=1}m_i$.

 (1) Suppose $m>3$. Then
\begin{itemize}
\item
$\frak{b}$ is conjugate to $\frak{E}$ or $\frak{F}$
for some $(m_1,\ldots,m_k)$-tuple $\Phi $.
\item
If $\Gamma=\mathbb{Z}_k$, then $\frak{b}$ is of $\Gamma$-dimension
$$\left(\sum^k_{i=1}\dot{m}_{i+1}\ddot{m}_i,\sum^k_{i=1}\dot{m}_{i+2}\ddot{m}_i,\ldots,\sum^k_{i=1}\dot{m}_{i+k-1}\ddot{m}_i,
\sum^k_{i=1}\dot{m}_{i+k}\ddot{m}_i\right),$$
where $\dot{m}_i, \ddot{m}_i$ are nonnegative numbers such that $\dot{m}_i+\ddot{m}_i=m_i$,
and $\dot{m}_{k+i}=\dot{m}_i$ for $1\leq i \leq k$.
In particular, $\dim \frak{b}=\lfloor m^{2}/4\rfloor$.
\end{itemize}

(2) If $m=3$, then $\frak{b}$  is conjugate to one of the following
$$\mathrm{Span}\left\{e_{12}, e_{13}\right\},
\quad\mathrm{Span}\left\{e_{23}, e_{13}\right\},
\quad\mathrm{Span}\left\{e_{12}, e_{33}\right\}.$$

(3) If $m=2$, then $\frak{b}$ is conjugate to
$\mathrm{Span}\left\{e_{12}\right\}$.
\end{corollary}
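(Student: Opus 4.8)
The plan is to deduce the corollary from Theorems \ref{200401947} and \ref{200406055} by passing between nil and pre-nil abelian subalgebras through the central element $\mathrm{I}_m$. The key observation is that if $\frak{b}$ is nil abelian, then $\mathbb{F}\mathrm{I}_m+\frak{b}$ is a pre-nil abelian subalgebra: the identity is central of degree $\bar{0}$ (one checks $\varepsilon(\bar 0,\gamma)=1$ from the bi-character axioms), so $[\mathrm{I}_m,x]=0$ for all $x$, and the only homogeneous elements of nonzero degree lie in $\frak{b}$, hence are nilpotent. Conversely, any pre-nil abelian subalgebra $\frak{a}$ is simultaneously triangularizable by Lemma \ref{1219}, say $\frak{a}\subseteq\frak{t}^{\Phi}(m_1,\ldots,m_k)$, so its nilpotent elements are exactly those with vanishing diagonal, i.e. $\frak{a}\cap\frak{s}^{\Phi}(m_1,\ldots,m_k)$, which is again a nil abelian subalgebra. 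This is the bridge I intend to use in both directions.

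For $m>3$ I would first bound the dimension. Since $\mathbb{F}\mathrm{I}_m+\frak{b}$ is pre-nil abelian and $\mathrm{I}_m\notin\frak{b}$ (as $\mathrm{I}_m$ is not nilpotent), Theorem \ref{200401947} gives $\dim\frak{b}+1=\dim(\mathbb{F}\mathrm{I}_m+\frak{b})\leq\lfloor m^2/4\rfloor+1$, so $\dim\frak{b}\leq\lfloor m^2/4\rfloor$. On the other hand $\frak{E}$ and $\frak{F}$ are nil abelian of dimension $\lfloor m^2/4\rfloor$, so a nil maximal abelian subalgebra attains this bound; hence $\dim\frak{b}=\lfloor m^2/4\rfloor$ and $\mathbb{F}\mathrm{I}_m+\frak{b}$ is pre-nil maximal abelian. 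By Theorem \ref{200401947}(1) it is conjugate to $\frak{E'}$ or $\frak{F'}$. I then extract the nilpotent part: an element $X+c\,\mathrm{I}_m$ with $X\in\frak{E}$ has all eigenvalues equal to $c$, so it is nilpotent precisely when $c=0$; thus the nilpotent elements of $\frak{E'}=\frak{E}\oplus\mathbb{F}\mathrm{I}_m$ are exactly $\frak{E}$, and likewise for $\frak{F'}$. Since conjugation preserves nilpotency, $\frak{b}$ maps into $\frak{E}$ (resp. $\frak{F}$), and equality of dimensions forces $\frak{b}$ to be conjugate to $\frak{E}$ (resp. $\frak{F}$). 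The $\mathbb{Z}_k$-dimension in (1) then follows by repeating verbatim the homogeneous count at the end of the proof of Theorem \ref{200401947}(2): the degree-$\bar{l}$ component of $\frak{E}$ (resp. $\frak{F}$) is spanned by the $e_{st}$ with $\Phi_s=\overline{i+l}$ and $\Phi_t=\bar{i}$, giving $\dim\frak{b}_{\bar{l}}=\sum_{i=1}^k\dot{m}_{i+l}\ddot{m}_i$, the only difference from Theorem \ref{200401947}(2) being the absence of the $\delta_{\bar{l},\bar{0}}$ term coming from $\mathrm{I}_m$.

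For $m=2$ and $m=3$ the same bridge applies, but the clean dimension argument is unavailable, so I would argue by direct inspection of the finite lists in Theorem \ref{200406055}: for each pre-nil maximal abelian subalgebra I pass to its nilpotent part and then select those of maximal dimension among nil abelian subalgebras (using that the strictly upper triangular $\frak{s}^{\Phi}$ in these small sizes admits only the listed abelian subalgebras up to conjugation), reading off the representatives in (2) and (3).

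The main obstacle I anticipate is precisely this small-$m$ bookkeeping: verifying maximality and non-redundancy up to conjugation once the identity-adjoining dimension count no longer pins everything down. The one structural point needing care in the general case is that the nilpotent part of a pre-nil maximal abelian subalgebra is a \emph{subspace} of the full dimension $\lfloor m^2/4\rfloor$, rather than something smaller; this is exactly what the simultaneous triangularization from Lemma \ref{1219} secures, since on triangular matrices the nilpotent locus coincides with the linear condition of vanishing diagonal.
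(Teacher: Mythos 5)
Your method is essentially the paper's own (implicit) route: the paper gives no separate proof of this corollary, presenting it as a by-product of Theorems \ref{200401947} and \ref{200406055}, and the intended mechanism is exactly your bridge $\frak{b}\mapsto\mathbb{F}\mathrm{I}_m\oplus\frak{b}$ followed by extraction of nilpotent parts. Your execution for $m>3$ is correct and complete: $\mathrm{I}_m$ is central of degree $0$, so $\mathbb{F}\mathrm{I}_m\oplus\frak{b}$ is pre-nil abelian; the dimension count (using that $\frak{E}$, $\frak{F}$ are nil abelian of dimension $\lfloor m^2/4\rfloor$) forces it to be pre-nil maximal; the nilpotent elements of $\frak{E'}$ (resp. $\frak{F'}$) are exactly $\frak{E}$ (resp. $\frak{F}$); and the graded count for $\Gamma=\mathbb{Z}_k$ is the one in the proof of Theorem \ref{200401947}(2) without the $\delta_{\bar{l},\bar{0}}$ term. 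The $m=2$ case also comes out correctly.

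The one substantive issue is in part (2), and it is not a gap in your argument but a discrepancy you should not gloss over: carried out honestly, your method does \emph{not} "read off" the paper's three representatives for $m=3$; it yields only the first two, and rightly so, because the paper's third representative is erroneous. The algebra $\mathrm{Span}\{e_{12},e_{33}\}$ is abelian but not nil: $(ae_{12}+be_{33})^2=b^2e_{33}$, so $e_{33}$ is a non-nilpotent element, and since conjugation preserves nilpotency no conjugate of a nil subalgebra can equal it. What apparently happened is that the third pre-nil class of Theorem \ref{200406055}(1) satisfies $\mathrm{Span}\{e_{11}+e_{22},e_{12},e_{33}\}=\mathbb{F}\mathrm{I}_3\oplus\mathrm{Span}\{e_{12},e_{33}\}$, and the complement of $\mathbb{F}\mathrm{I}_3$ was recorded instead of the set of nilpotent elements, which is only $\mathrm{Span}\{e_{12}\}$ (an element $a(e_{11}+e_{22})+be_{12}+ce_{33}$ has eigenvalues $a,a,c$, hence is nilpotent iff $a=c=0$). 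Your scheme settles $m=3$ cleanly: $\dim\frak{b}\le\lfloor 9/4\rfloor=2$ by the identity-adjoining bound, so $\mathbb{F}\mathrm{I}_3\oplus\frak{b}$ is pre-nil maximal and must be conjugate to one of the first two classes of Theorem \ref{200406055}(1) (the third and fourth have nilpotent loci of dimension at most $1$), whence $\frak{b}$ is conjugate to $\mathrm{Span}\{e_{12},e_{13}\}$ or $\mathrm{Span}\{e_{23},e_{13}\}$ only. So your proof is sound and proves a corrected form of the statement; the anticipated "small-$m$ bookkeeping" obstacle is real only in the sense that it exposes this error in the printed list.
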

From Theorems \ref{200406055} and \ref{2004052128}, we have the following remark.
If we focus only on the maximal dimension of pre-nil  or nil abelian subalgebras for $\frak{gl}(m_1, \ldots, m_k)$,
we may give a direct proof of  the following remark by virtue of Mirzakhani's idea in \cite{Mirzakhani},
for which readers  may see the Appendix.
\begin{remark}\label{200406129}
(1) Any nil maximal abelian subalgebra of $\frak{gl}(m_1, \ldots, m_k)$ is of dimension
 $\lfloor\frac{\left(\sum^k_{i=1}m_i\right)^2}{4}\rfloor$.

 (2) Any pre-nil maximal  abelian subalgebra of $\frak{gl}(m_1, \ldots, m_k)$ is of dimension
  $\lfloor\frac{\left(\sum^k_{i=1}m_i\right)^2}{4}\rfloor+1$.

\end{remark}

\subsection{Applications}
For any $\mathbb{Z}_k$-graded  abelian Lie color algebra $L$,
the following theorem gives  a method of determining $\mu_{\text{pre-nil}}(L)$  and $\mu_{\text{nil}}(L)$.
\begin{theorem}\label{200406126}
Let $L$ be an abelian Lie color algebra.
\begin{itemize}
\item[(1)]
$\mu_{\text{pre-nil}}(L)\geq \lceil2\sqrt{\dim L-1}\rceil,\quad \mu_{\text{nil}}(L)\geq \lceil2\sqrt{\dim L}\rceil.$
\item[(2)]
In case $\Gamma=\mathbb{Z}_k$,
$L$ possesses a pre-nil faithful representation of dimension $m$ if and only if
$m$ admits a $2k$-partition $(\dot{m}_1, \ddot{m}_1, \ldots, \dot{m}_k, \ddot{m}_k)$ such that
$\sum^k_{i=1}\dot{m}_{i+l}\ddot{m}_i+\delta_{\bar{l},0}1\geq\dim L_{\bar{l}}$,
where $\dot{m}_{k+i}=\dot{m}_i$ and $1\leq l\leq k-1$.
\item[(3)]
In case $\Gamma=\mathbb{Z}_k$,
$L$ possesses a nil faithful representation of dimension $m$ if and only if
$m$ admits a $2k$-partition $(\dot{m}_1, \ddot{m}_1, \ldots, \dot{m}_k, \ddot{m}_k)$ such that
$\sum^k_{i=1}\dot{m}_{i+l}\ddot{m}_i\geq\dim L_{\bar{l}}$,
where $\dot{m}_{k+i}=\dot{m}_i$ and $1\leq l\leq k-1$.
\end{itemize}
\end{theorem}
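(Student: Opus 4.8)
The plan is to reduce everything to a combinatorial question on the graded dimensions of pre-nil (resp.\ nil) abelian subalgebras of $\frak{gl}(V,\Gamma)$, exploiting that $L$ is abelian. Because $[L,L]=0$, any degree-$0$ linear injection $\rho\colon L\to\frak{a}$ into an abelian subalgebra $\frak{a}$ is automatically a homomorphism; hence $L$ has a pre-nil (resp.\ nil) faithful representation of dimension $m$ if and only if there is a $\Gamma$-graded space $V$ with $\dim V=m$ and a pre-nil (resp.\ nil) abelian subalgebra $\frak{a}\subseteq\frak{gl}(V,\Gamma)$ with $\dim\frak{a}_{\bar l}\geq\dim L_{\bar l}$ for every $\bar l$. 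One direction takes $\frak{a}=\rho(L)$, where the inequalities become equalities; the other picks degree-preserving injections $L_{\bar l}\hookrightarrow\frak{a}_{\bar l}$ and assembles them. Statements (1), (2) and (3) are then three facets of this reduction.

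For (1) I would invoke Remark~\ref{200406129}: for a faithful representation of dimension $m$, injectivity gives $\dim L=\dim\rho(L)\leq\lfloor m^{2}/4\rfloor+1$ in the pre-nil case and $\leq\lfloor m^{2}/4\rfloor$ in the nil case, so solving $\dim L-1\leq m^{2}/4$ (resp.\ $\dim L\leq m^{2}/4$) for integral $m$ yields $m\geq\lceil2\sqrt{\dim L-1}\,\rceil$ (resp.\ $m\geq\lceil2\sqrt{\dim L}\,\rceil$); minimizing over representations gives both bounds. For the ``if'' halves of (2) and (3) I would build $\frak{a}$ by hand: given a $2k$-partition, let $V$ carry $\dot m_i$ basis vectors of degree $\bar i$ in an upper block $U$ and $\ddot m_i$ in a lower block $W$, and set $\frak{a}=\mathrm{Span}\{e_{pq}\mid p\in U,\ q\in W\}$, adjoining $\mathbb{F}\mathrm{I}_m$ in the pre-nil case. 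Such $\frak{a}$ is abelian (any two of these matrix units multiply to zero) and nil (resp.\ pre-nil), and a direct count gives $\dim\frak{a}_{\bar l}=\sum_{i=1}^{k}\dot m_{i+l}\ddot m_i+\delta_{\bar l,0}$ (resp.\ without the $\delta$), precisely the right-hand side of the asserted inequality; the partition hypothesis then lets $L$ embed.

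The content lies in the ``only if'' halves, where from an arbitrary pre-nil (resp.\ nil) abelian $\frak{a}=\rho(L)$ I must extract a $2k$-partition of $m$ dominating $(\dim L_{\bar l})_{\bar l}$. The nil case (3) is clean: by Lemma~\ref{1219} I may assume $\frak{a}\subseteq\frak{s}^{\Phi}(m_1,\ldots,m_k)$, and iterating the HGMU decomposition of Lemma~\ref{1527} yields homogeneous generalized matrix units whose row indices form a set $R$ disjoint from the set of columns occurring. Taking $U=R$, $W$ its complement, and reading $\dot m_i,\ddot m_i$ off the degrees in $U,W$, every unit of degree $\bar l$ is a pair $(p,q)\in U\times W$ with $\Phi_p-\Phi_q=\bar l$ and distinct units share no $(p,q)$, so $\dim\frak{a}_{\bar l}\leq\sum_i\dot m_{i+l}\ddot m_i$: the required partition.

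The hard part will be the pre-nil ``only if'' in (2), owing to the diagonal part. After triangulating via Lemma~\ref{1219} I would write $\frak{a}=\frak{n}\oplus\frak{h}$ with $\frak{n}=\frak{a}\cap\frak{s}^{\Phi}$ nilpotent and $\frak{h}$ the diagonal image, a torus of dimension $s$ sitting in degree $\bar 0$, so that $\dim\frak{a}_{\bar l}=\dim\frak{n}_{\bar l}$ for $\bar l\neq\bar 0$ while $\dim\frak{a}_{\bar 0}=\dim\frak{n}_{\bar 0}+s$. The HGMU bound controls $\frak{n}$, but I must absorb the surplus $s$ into the degree-$\bar 0$ count of one partition although the $+\delta_{\bar l,0}$ offers only a single free dimension. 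The resolution I would aim for is to re-grade the $m$ slots: since $\frak{h}$ is constant on the connected components of the interaction graph of $\frak{n}$, reassigning the slots it lives on to degree $\bar 0$ and splitting them as evenly as possible raises the $\bar 0$-count by $\lfloor t^{2}/4\rfloor\geq t-1$ for $t$ reassigned slots, which is exactly the slack needed to convert $s$ toral dimensions into $s-1$ nil degree-$\bar 0$ dimensions plus the scalar. Making this precise --- equivalently, showing at the representation level that ``pre-nil in dimension $m$'' is equivalent to ``nil in dimension $m$ with $\dim L_{\bar 0}$ lowered by one'' --- is the crux; once it is in place, (3) delivers (2).
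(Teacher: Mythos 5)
Your reduction of the theorem to graded-dimension domination by pre-nil (resp.\ nil) abelian subalgebras is correct, and part (1), the two ``if'' halves, and the ``only if'' half of (3) are sound. Indeed your treatment of (3) is arguably more careful than the paper's own one-line citation of Corollary \ref{2004052358}(2): that corollary classifies nil abelian subalgebras of \emph{maximal dimension}, and using it for the ``only if'' presupposes that $\rho(L)$ can be enlarged to one of maximal dimension, which is not automatic; running the HGMU decomposition of Lemma \ref{1527} directly on $\rho(L)$, as you do, and reading the partition off the disjoint row and column index sets, is exactly what makes this direction work for an arbitrary nil abelian image.

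The genuine gap is the ``only if'' half of (2), and you concede it yourself. Three concrete problems. First, the decomposition $\frak{a}=\frak{n}\oplus\frak{h}$ with $\frak{n}=\frak{a}\cap\frak{s}^{\Phi}$ and $\frak{h}$ a diagonal torus sitting \emph{inside} $\frak{a}$ is false in general: the diagonal part of an element of $\frak{a}$ need not belong to $\frak{a}$ (consider $\frak{a}=\mathbb{F}(e_{11}+e_{23})$), and when $\frak{a}$ has several generalized weights on $V$ one only gets an inclusion $\frak{a}\subseteq d(\frak{a})\oplus n(\frak{a})$, where $d(\frak{a})$ is spanned by block-scalar matrices with $\dim d(\frak{a})\leq|\Delta|$; the surplus to be absorbed into the single $+\delta_{\bar{l},0}$ is therefore $|\Delta|-1$, not ``$s-1$ for one torus''. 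Second, the re-grading device clashes with the nil bookkeeping: every slot lies in some weight space, so the slots you would re-declare of degree $\bar{0}$ also carry generalized matrix units of nonzero degree, and moving those slots lowers the counts $\sum_i\dot{m}_{i+l}\ddot{m}_i$ for $\bar{l}\neq\bar{0}$ that you have already spent to dominate $\dim L_{\bar{l}}$; making the two requirements compatible is precisely the unproved point. Third, the equivalence you defer to --- pre-nil faithful in dimension $m$ iff nil faithful in dimension $m$ after lowering $\dim L_{\bar{0}}$ by one --- is exactly the conjunction of (2) and (3), so invoking it is circular. What is actually needed is the analogue, for arbitrary non-maximal pre-nil abelian subalgebras, of the weight-space induction in case (b) of the proof of Theorem \ref{200401947} (cases (I)--(III)), carried out with graded rather than total dimension counts; the paper short-circuits this by citing Theorem \ref{200401947}(2) for maximal subalgebras, but your proposal supplies no substitute for it.
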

\begin{proof}
(1)
Let
$\iota: L\longrightarrow \frak{gl}(m_1, \ldots, m_k)$ be  a pre-nil  faithful representation of $L$.
Then  $\iota(L)$ is a pre-nil  abelian subalgebra of $\frak{gl}(m_1, \ldots, m_k)$.
From Theorem \ref{200401947}(1),
$$\dim L=\dim \iota(L)\leq \lfloor (\sum^k_{i=1}m_i)^{2}/4\rfloor+1.$$
Consequently,  $\sum^k_{i=1}m_i\geq \lceil2\sqrt{\dim L-1}\rceil$.

Furthermore, if $\iota$ is nil, then each element in $\iota(L)$ is nilpotent.
From Corollary \ref{2004052358},
$$\dim L=\dim \iota(L)\leq \lfloor (\sum^k_{i=1}m_i)^{2}/4\rfloor.$$
Consequently,  $\sum^k_{i=1}m_i\geq \lceil2\sqrt{\dim L}\rceil$.

(2) and (3) are true by virtue of Theorem \ref{2004052128}(2) and Corollary \ref{2004052358}(2), respectively.
\end{proof}

\section{Appendix: The maximal dimension  of pre-nil  or nil abelian subalgebras for $\frak{gl}(m_1,\ldots,m_k)$}
In this section, we give a direct proof of Remark \ref{200406129}. The main idea comes from
 Mirzakhani's work \cite{Mirzakhani}.

Let $\frak{a}$ be any maximal pre-nil abelian subalgebra of $\frak{gl}(m_1,\ldots,m_k)$.
 We may assume that
$\frak{a}$ is contained in
$\frak{t}^{\Phi }(m_1,\ldots,m_k)$ from  Lemma  \ref{1219}.
Let us use induction on $m$ to show that
$\dim \frak{a}\leq\lfloor m^{2}/4\rfloor+1$.
When $m=1$,  the conclusion holds
since $\frak{gl}^{\Phi }(m_1,\ldots,m_k)$ is of dimension 1.
Assume that the conclusion holds for $m-1$.
Let us consider the case  $m$.
Suppose the contrary, that is,
$\dim \frak{a}>\lfloor m^{2}/4\rfloor+1$.
Then $\frak{a}$ contains an abelian subalgebra, say $\frak{n}$, of dimension
\begin{equation}\label{1251}
\nu(m):=\lfloor m^{2}/4\rfloor +2.
\end{equation}
Fix   a homogeneous  basis of $\frak{n}$:
$\{A_{i}\mid 1\leq i\leq\nu(m)\}.$
For $1\leq i\leq \nu(m)$, let $\bar{A}_{i}$  and $\tilde{A}_{i}$
be $(m-1)\times (m-1)$ matrices such that
$$A_i=\small{\left[
\begin{array}{cccc} a_{11}&\ &\cdots&a_{1m}\\
\ &\ &\ &\ \\\vdots&\ &\bar{A}_i&\
\\a_{m1}&\ &\ &\
 \end{array}\right]}=\left[
\begin{array}{ccccc}
 \ &\ &\ &a_{1m}\\ \ &\tilde{A}_i&\ &\vdots\\\ &\ &\ &\ \\ a_{m1}&\cdots &\ &a_{mm}\end{array}\right].$$
Write
$$
\mbox{$\bar{\Phi }=(\hat{\Phi }_{1}, \Phi _{2}, \ldots, \Phi _{m})$
and $\tilde{\Phi }=(\Phi _{1}, \Phi _{2}, \ldots, \hat{\Phi }_{m})$,}
$$
where  the sign $\hat{\;}$ means that the element under it is omitted.
It is obvious that $\bar{\Phi }$
is  an $(m_1, \ldots, m_g-1, \ldots, m_k)$-tuple if $\Phi _1=\alpha_g$,
 $\tilde{\Phi }$ is  an $(m_1, \ldots, m_h-1, \ldots, m_k)$-tuple if $\Phi _m=\alpha_h$.
Then $\bar{A}_{i}$  and $\tilde{A}_{i}$ are in $\frak{gl}^{\bar{\Phi }}(m_1, \ldots, m_g-1, \ldots, m_k)$
and $\frak{gl}^{\tilde{\Phi }}(m_1, \ldots, m_h-1, \ldots, m_k)$ respectively, and $|\bar{A}_i|=|A_i|=|\tilde{A}_i|$.
By $[A_i, A_j]=0$, we have $A_iA_j=\varepsilon\left(|A_i|, |A_j|\right)A_jA_i$. Then
\begin{eqnarray*}
&&\small{\left[
\begin{array}{cccc} \ast&\ &\cdots&\ast\\0&\ &\ &\ \\\vdots&\ &\bar{A_j}\bar{A_j}&\
\\0&\ &\ &\
 \end{array}\right]}=\varepsilon\left(|A_i|, |A_j|\right)\small{\left[
\begin{array}{cccc} \ast&\ &\cdots&\ast\\0&\ &\ &\ \\\vdots&\ &\bar{A_j}\bar{A_i}&\
\\0&\ &\ &\
 \end{array}\right],}\\
&&\left[
\begin{array}{ccccc}
 \ &\ &\ &\ast\\ \ &\tilde{A_i}\tilde{A_j}&\ &\vdots\\\ &\ &\ &\ \\ 0&\cdots &0&\ast\end{array}\right]=
 \varepsilon\left(|A_i|, |A_j|\right)\left[
\begin{array}{ccccc}
 \ &\ &\ &\ast\\ \ &\tilde{A_j}\tilde{A_i}&\ &\vdots\\\ &\ &\ &\ \\ 0&\cdots &0&\ast\end{array}\right].
\end{eqnarray*}
That is, $[\bar{A_i}, \bar{A_j}]=0=[\tilde{A_i}, \tilde{A_j}]$.
Let $\bar{W}$ and $\tilde{W}$ be the $\Gamma$-graded vector spaces spanned by
$\{\bar{A}_{i}\mid 1\leq i\leq\nu(m)\}$ and $\{\tilde{A}_{i}\mid 1\leq i\leq\nu(m)\}$, respectively.
Then $\bar{W}$ and $\tilde{W}$
are also abelian subalgebras of $\frak{t}^{\bar{\Phi }}(m_1,\ldots,m_g-1,\ldots,m_k)$ and
$\frak{t}^{\tilde{\Phi }}(m_1,\ldots,m_h-1,\ldots,m_k)$ for some $g$ and $h$, respectively.
  Write $r=\dim \bar{W}$ and $t=\dim \tilde{W}$.
 By inductive hypothesis, we have
\begin{equation} \label{1239}
\begin{gathered}
r\leq\lfloor(m-1)^{2}/4\rfloor+1,\\
t\leq\lfloor(m-1)^{2}/4\rfloor+1.
\end{gathered}
\end{equation}
 Without loss of generality, we may assume  that
$\{\bar{A}_{i}\mid 1\leq i\leq r\}$
are linearly independent, so are $\{\tilde{A}_{i}\mid 1\leq i\leq t\}$.
Let
\begin{equation*}\label{equation}
\mbox{$\bar{A}_{i}=\sum^{r}_{k=1}\bar{m}_{ik}\bar{A}_{k},$ where
$\bar{m}_{ik}\in \mathbb{F}$ and $ r+1\leq i\leq\nu(m)$,}
\end{equation*}
\begin{equation*}\label{equation1}
\mbox{$\tilde{A}_{j}=\sum^{t}_{k=1}\tilde{m}_{jk}\tilde{A}_{k},$ where
$\tilde{m}_{jk}\in \mathbb{F}$ and  $ t+1\leq j\leq\nu(m)$}.
\end{equation*}
Note that $|\bar{A}_{i}|=|\bar{A}_{j}|$
if  $\bar{m}_{ij}\neq 0$,
and $|\tilde{A}_{i}|=|\tilde{A}_{j}|$
if  $\tilde{m}_{ij}\neq 0$.
Then
$$\mbox{ $|A_{i}|=|A_{j}|$
if  $\bar{m}_{ij}\neq 0$ or  $\tilde{m}_{ij}\neq 0$.}$$
For $r+1\leq i\leq\nu(m)$ and $t+1\leq j\leq\nu(m)$,
write
$$\bar{B}_{i}=A_{i}-\sum^{r}_{k=1}\bar{m}_{ik}A_{k}, \quad \tilde{B}_{j}=A_{j}-\sum^{t}_{k=1}\tilde{m}_{jk}A_{k}.$$
Thus each $\bar{B}_{i}$ (resp. $\tilde{B}_{j}$) is  homogeneous  and
of the form $[\bar{b}_{i},\; O]^{\mathrm{t}}$
(resp. $\left[O, \tilde{b}_{j}\right]$),
where $\bar{b}_{i}^{\mathrm{t}}=\bar{a}_{i}-\sum^{r}_{k=1}\bar{m}_{ik}\bar{a}_{k}$ is a
$1\times m$ matrix and  $\bar{a}_{q}$ is the first row of $A_{q}$ for
$1\leq q\leq\nu(m)$
(resp. $\tilde{b}_{j}=\tilde{a}_{j}-\sum^{l}_{k=1}\tilde{m}_{jk}\tilde{a}_{k}$   is an $m\times 1$
matrix and $\tilde{a}_{q}$ is the last column of $A_{q}$, $1\leq q\leq \nu(m)$).
Hereafter $X^{\mathrm{t}}$ denotes the
transpose of a matrix $X$.
Clearly,
$$\mbox{$\left\{\bar{B}_{i}\mid r+1\leq i\leq\nu(m)\right\}$
(resp. $\left\{\tilde{B}_{i}\mid t+1\leq i\leq\nu(m)\right\}$)}$$
 are linearly independent,
and so are
$$\mbox{$\left\{\bar{b}^{\mathrm{t}}_{i}\mid r+1\leq i\leq\nu(m)\right\}$
(resp. $\left\{\tilde{b}_{i}\mid t+1\leq i\leq\nu(m)\right\}$).}$$
Let $M=\left[\bar{b}_{r+1}, \bar{b}_{r+2}, \ldots, \bar{b}_{\nu(m)}\right]^{\mathrm{t}}$.
Clearly,
\begin{equation}\label{1256}
\mathrm{rank} M=\nu(m)-r.
\end{equation}
Denote by  $W$ the set $\left\{X\in \mathbb{F}^{m}\mid MX=0\right\}$.
Then
\begin{equation}\label{1254}
\dim W = m-\mathrm{rank} M.
\end{equation}
For $r+1\leq i\leq\nu(m)$ and $ t+1\leq j\leq\nu(m)$,
 $\bar{B}_{i}, \tilde{B}_{j}$ are homogenous matrices in $\frak{n}$.
Note that $\tilde{B}_{j}\bar{B}_{i}=0$, then $\bar{B}_{i}\tilde{B}_{j}=0$ by $[\bar{B}_{i}, \tilde{B}_{j}]=0$.
Consequently,
$$\mbox{$\bar{b}^{\mathrm{t}}_{i}\tilde{b}_{j}=0$
for $r+1\leq i\leq\nu(m)$ and $ t+1\leq j\leq\nu(m)$},$$
that is, the set $\left\{\tilde{b}_{j}\mid t+1\leq j\leq\nu(m)\right\}$ is contained in $W$, which are  linearly independent.
Consequently,
\begin{equation}\label{1257}
\dim W\geq\nu(m)-t.
\end{equation}
Therefore
\begin{eqnarray*}
m&\stackrel{(\ref{1254})}{=}&\mathrm{rank} M+\dim W\\
&\stackrel{(\ref{1256})(\ref{1257})}{\geq}&\nu(m)-r+\nu(m)-t\\
&\stackrel{(\ref{1251})}{=}&2(\lfloor m^{2}/4\rfloor +2)-r-t\\
&\stackrel{(\ref{1239}) }{\geq}&2(\lfloor
m^{2}/4\rfloor-\lfloor(m-1)^{2}/4\rfloor+1).
\end{eqnarray*}
Thus, if $m=2q$ is even, then
$2q\geq2(q+1)$, a contradiction; if $m=2q+1$ is odd, then $2q+1\geq2(q+1)$,
also a contradiction.
Hence $\dim \frak{a}\leq\lfloor m^{2}/4\rfloor+1$.
By (\ref{2003221402}), we have  $\dim \frak{a}\geq=\dim \frak{E}'=\lfloor m^{2}/4\rfloor+1.$

Furthermore, if each element in $\frak{a}$ is nilpotemt, we may assume that
$\frak{a}$ is contained in $\frak{s}^{\Phi }(m_1,\ldots,m_k)$ from  Lemma  \ref{1219}.
Thus $$\lfloor m^{2}/4\rfloor=\dim\frak{E}\leq\dim \frak{a}<\lfloor m^{2}/4\rfloor+1.$$

\textbf{ACKNOWLEDGMENTS}

This study was supported by the NSF of Heilongjiang
Province (YQ2020A005) and the NSF of China (12061029). 
There are no relevant financial or non-financial competing interests in this paper.  
No potential competing interest was reported by the authors.


\end{document}